\newcommand{\ra}[1]{\renewcommand{\arraystretch}{1.2}}
\newtheorem{thm}{Theorem}
\renewcommand{\maketitle}%
{\thispagestyle{empty}
\noindent
\par\vspace*{3cm}\par
{\centering{\Large\textbf\MATCHtitle\par}
\par\vspace*{6pt}\par
{\textrm\MATCHauthor}\par\vspace*{12pt}\par}%
}
\renewenvironment{abstract}%
{\begin{center}\begin{minipage}[t]{1.0\textwidth}
\noindent\small\textbf{Abstract$\colon$}}%
{\end{minipage}\end{center}\par}
\newcommand{\MATCHtitle}
{Quadrature based optimal iterative methods}
\newcommand{\MATCHauthor}
{
}
\begin{document}
\maketitle

\nopagebreak[1]
\npstyleenglish
\npfourdigitnosep

\begin{center}
Sanjay K. Khattri, \footnote{Corresponding authors.\\
E-mails: sanjay.khattri@hsh.no, agarwal@fit.edu}\\
Department of Engineering, Stord Haugesund University College, Norway \\
Ravi P. Agarwal,\\
Department of Mathematical Sciences, Florida Institute of Technology, \\
Melbourne, Florida 32901-6975, U.S.A.
\end{center}

\begin{abstract}
We present a simple yet powerful and applicable quadrature based scheme for constructing optimal iterative methods. According to the, still unproved, Kung-Traub conjecture an optimal iterative method based on $n+1$ evaluations could achieve a maximum convergence order of $2^n$. Through quadrature, we develop optimal iterative methods of orders four and eight. The scheme can be further applied to develop iterative methods of even higher order. Computational results demonstrate that the developed methods are efficient as compared with many well known methods. 
\vspace{0.15cm}

{\bf Mathematics Subject Classification (2000):} 65H05 $\cdot$ 65D99 $\cdot$ 41A25 
\vspace{0.15cm}

{\bf Keywords:} iterative methods; fourth order; eighth order; quadrature; Newton; convergence; nonlinear; optimal.
\end{abstract}
\section{Introduction}\label{intro}
Many problems in science and engineering require solving nonlinear equation
\begin{equation}
f(x) = 0, \label{eq1}
\end{equation}
[1-13]. One of the best known and probably the most used method for solving the preceding equation is the Newton's method. The classical Newton method is
given as follows ({\bf NM})
\begin{equation}
x_{n+1} = x_{n} - \dfrac{f(x_n)}{f^\prime(x_n)},\qquad n = 0,1,2,3,\ldots, \quad \text{and}\quad  \vert{f^\prime(x_n)}\vert\neq{0}.\label{eq:newton}
\end{equation}
The Newton's method converges quadratically [1-13]. There exists numerous modifications of the Newton's method which improve the convergence rate (see [1-21] and references therein). This work presents a new quadrature based scheme for constructing optimal iterative methods of various convergence orders. According to the Kung-Traub conjecture an optimal iterative method based upon $n+1$ evaluations could achieve a convergence order of $2^n$. Through the scheme, we construct optimal fourth order and eighth order iterative methods. Fourth order method requests three function evaluations while the eighth order method requests four function evaluations during each iterative step. The next section presents our contribution.
\section{Quadrature based scheme for constructing iterative methods}
Our motivation is to develop a scheme for constructing optimal iterative methods. To construct higher order method from the Newton's method \eqref{eq:newton}, we use the following generalization of the Traub's theorem (see \citep[Theorem 2.4]{traub} and \citep[Theorem 3.1]{pet}).
\begin{thm}\label{thm:1}
Let $g_1(x)$, $g_2(x)$,$\ldots$,$g_s(x)$ be iterative functions with orders $r_1$, $r_2$,$\ldots$,$r_s$, respectively. Then the composite iterative functions $$g(x) = g_1(g_2(\cdots(g_z(x))\cdots))$$
define the iterative method of the orders $r_1r_2r_3\cdots\,r_s$.
\end{thm}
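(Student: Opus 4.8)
The plan is to prove the statement by induction on the number $s$ of composed iterative functions, reducing everything to the base case of composing two iterative functions. The key definition to make precise is the notion of "order": an iterative function $g$ has order $r$ at a fixed point $\xi$ (where $g(\xi)=\xi$ and $\xi$ is the sought root) if the error $e_n = x_n - \xi$ satisfies $e_{n+1} = C\,e_n^{r} + O(e_n^{r+1})$ for some nonzero asymptotic error constant $C$. Equivalently, in terms of the iteration error $g(x) - \xi$, we have $g(x) - \xi = C\,(x-\xi)^r + O\!\left((x-\xi)^{r+1}\right)$ as $x \to \xi$.

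First I would establish the base case with two functions, $g(x) = g_1(g_2(x))$. Writing $\varepsilon = x - \xi$ for the incoming error, the inner map gives $g_2(x) - \xi = C_2\,\varepsilon^{r_2} + O(\varepsilon^{r_2+1})$, so the quantity fed into $g_1$ has error of size $O(\varepsilon^{r_2})$. Applying the order-$r_1$ property of $g_1$ to this new error yields $g_1(g_2(x)) - \xi = C_1\,\bigl(g_2(x)-\xi\bigr)^{r_1} + O\!\left(\bigl(g_2(x)-\xi\bigr)^{r_1+1}\right)$. Substituting the expansion for $g_2(x)-\xi$ and keeping the leading term shows the composite error behaves like $C_1 C_2^{r_1}\,\varepsilon^{r_1 r_2} + O(\varepsilon^{r_1 r_2 + 1})$, so the composite has order $r_1 r_2$. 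This is essentially a Taylor/Landau bookkeeping argument, and the only subtlety is checking that the leading coefficient $C_1 C_2^{r_1}$ is nonzero, which follows because each $C_i$ is nonzero by assumption.

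Next I would run the induction. Assume the result holds for $s-1$ functions, so that $h(x) := g_2(g_3(\cdots(g_s(x))\cdots))$ is an iterative function of order $r_2 r_3 \cdots r_s$. Then $g(x) = g_1(h(x))$ is a composition of two iterative functions of orders $r_1$ and $r_2 r_3 \cdots r_s$; applying the already-proved base case gives order $r_1 \cdot (r_2 r_3 \cdots r_s) = r_1 r_2 \cdots r_s$, closing the induction.

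The main obstacle I expect is not the algebra of multiplying the orders but the technical hypothesis that all the functions share the same fixed point $\xi$ and that the intermediate iterates stay close enough to $\xi$ for the local Taylor expansions to be valid. I would handle this by working entirely in a neighborhood of the common root $\xi$ and noting that each iterative function is assumed to fix $\xi$ (this is implicit in being an iterative method for the same equation), so that feeding $g_2(x)$, which lies near $\xi$, into $g_1$ keeps us in the region where $g_1$'s order estimate applies. With these domain issues controlled, the error-propagation estimate is straightforward and the theorem follows.
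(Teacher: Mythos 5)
Your argument is correct, but there is nothing in the paper to compare it against: the paper states this theorem without proof, simply citing Traub (Theorem 2.4 of his book) and Petkovi\'c (Theorem 3.1 of the cited SIAM paper). Your induction on $s$, with the two-function composition $g_1(g_2(x))$ as base case, is in fact the classical argument from Traub's book, so you have effectively reconstructed the omitted source proof. The bookkeeping is sound: from $g_2(x)-\xi = C_2\varepsilon^{r_2}+O(\varepsilon^{r_2+1})$ and the order-$r_1$ expansion of $g_1$ you correctly get the leading term $C_1C_2^{r_1}\varepsilon^{r_1r_2}$, and the nonvanishing of $C_1C_2^{r_1}$ is exactly the point that needs checking. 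Two small remarks. First, your expansion-based definition of order implicitly assumes the $r_i$ are integers (as they are in this paper, where the theorem is applied with $r_i\in\{2,4\}$); for fractional orders one must instead work with $|g(x)-\xi| \sim C|x-\xi|^r$ and the same argument goes through with absolute values. Second, you were right to flag that all $g_i$ must fix the same point $\xi$ --- the theorem statement leaves this implicit, and your observation that $g_2(x)$ lands in the neighborhood where the order estimate for $g_1$ applies (because $r_2\ge 1$ forces $g_2(x)\to\xi$ as $x\to\xi$) is the one genuine technical point in the proof; you handled it adequately.
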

From the preceding theorem and the Newton method \eqref{eq:newton}, we consider the fourth order modified double Newton method
\begin{equation}
\left\{ \begin{array}{llll}
y_n &=& x_n - \dfrac{f(x_n)}{f^\prime(x_n)}, \\
x_{n+1} &=& y_n - \dfrac{f(y_n)}{f^\prime(y_n)}. \label{eq:doublenewton}
\end{array}\right.
\end{equation}
Since the convergence order of the double Newton method is four and it requires four evaluations during each step. Therefore, according to the Kung and Traub conjecture, for the double Newton method to be optimal it must require only three function evaluations. By the Newton's theorem the derivative in the second step of the dobule Newton method can be expressed as
\begin{equation}
f^\prime(y_n) = f^\prime(x_n) + \int_{x_n}^{y_n}\,f^{\prime\prime}(t)\,\mathrm{d}t,\label{eq:newtontheorem}
\end{equation}
let us approximate the integral in the preceding equation as follows
\begin{equation}
\int_{x_n}^{y_n}\,f^{\prime\prime}(t)\,\mathrm{d}t = \omega_1\,f(x_n) + \omega_2 \,f(y_n) + \omega_3\,f^\prime(x_n).\label{eq:1}
\end{equation}
To determine the real constants  $\omega_1$, $\omega_2$ and $\omega_3$ in the preceding equation, we consider the equation is valid for the three functions: $f(t) = \textrm{constant}$, $f(t) = t$ and $t(t) = t^2$. Which yields the equations
\begin{equation}
\left\{ \begin{array}{llll}
\omega_1 + \omega_2 &= 0, \\
\omega_1\,x_n + \omega_1\,y_n + \omega_3  &= 0, \\
\omega_1\,x_n^2+\omega_2\,y_n^2+ \omega_3\,2\,x_n &= 2\,(y_n-x_n).
\end{array}\right.
\end{equation}
Solving the preceding equations and substituting the values in the equations \eqref{eq:newtontheorem} and \eqref{eq:1}, we obtain
\begin{equation}
f^\prime(y_n) = 2\,\left(\dfrac{f(y_n)-f(x_n)}{y_n-x_n}\right)-f^\prime(x_n).
\end{equation}
Combining the double Newton method and preceding approximation for the derivative, we propose the method ({\bf {M-4}})
\begin{equation}
\left\{ \begin{array}{llll}
y_n &=& x_n - \dfrac{f(x_n)}{f^\prime(x_n)}, \\
x_{n+1} &=& y_n - \dfrac{f(y_n)}{2\,\left(\dfrac{f(y_n)-f(x_n)}{y_n-x_n}\right)-f^\prime(x_n)}. \label{eq:method1}
\end{array}\right.
\end{equation}
Since the method ({\bf {M-4}}) is fourth order convergent and it requests only three evaluations. Thus according to the Kung-Traub conjecture it is an optimal method. We prove the fourth order convergent behavior of the iterative method \eqref{eq:method1} through the following theorem.
\begin{thm}
Let $\gamma$ be a simple zero of a sufficiently differentiable function $f\colon{\mathbf{D}}\subset{\mathbf{R}}\mapsto{\mathbf{R}}$ in an open interval $\mathbf{D}$. If $x_0$ is sufficiently close to $\gamma$, the convergence order of the method \eqref{eq:method1} is $4$ and the error equation for the method is given as
$$e_{n+1}  = -{\dfrac { \left( c_{{3}}c_{{1}}-c_{{2}}^{2} \right) c_{{2}}}{
c_{{1}}^{3}}}e_{{n}}^{4}+O \left( e_{{n}}^{5} \right) .$$
Here, $e_{n} = x_n-\gamma$, $c_m = f^{m}(\gamma)/\Factorial{m}$ with $m\ge{1}$.
\end{thm}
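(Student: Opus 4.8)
The plan is to establish the order of convergence by a direct Taylor expansion about the simple zero $\gamma$, tracking every quantity as a power series in the current error $e_n = x_n - \gamma$ and showing that the coefficients of $e_n^2$ and $e_n^3$ in the final error cancel. Throughout I write $c_k = f^{(k)}(\gamma)/k!$, so that $f(x_n) = c_1 e_n + c_2 e_n^2 + c_3 e_n^3 + c_4 e_n^4 + O(e_n^5)$ and $f'(x_n) = c_1 + 2c_2 e_n + 3c_3 e_n^2 + 4c_4 e_n^3 + O(e_n^4)$, using $f(\gamma)=0$ and $c_1 = f'(\gamma) \neq 0$ since the zero is simple.

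First I would handle the inner Newton step. Dividing the two series above gives $f(x_n)/f'(x_n) = e_n - (c_2/c_1)e_n^2 + \cdots$, whence the intermediate error $\hat e_n := y_n - \gamma = (c_2/c_1)e_n^2 + 2(c_3/c_1 - c_2^2/c_1^2)e_n^3 + \cdots$, the classical quadratic Newton error. I would carry this expansion through order $e_n^4$ because it feeds every later step. Substituting $\hat e_n$ into the Taylor series of $f$ then yields $f(y_n) = c_1\hat e_n + c_2\hat e_n^2 + \cdots$, a series beginning at $e_n^2$.

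The heart of the argument is the denominator $D_n = 2\bigl((f(y_n)-f(x_n))/(y_n-x_n)\bigr) - f'(x_n)$ that the quadrature rule produces in place of $f'(y_n)$. I would expand the divided difference by factoring the common leading factor $-e_n$ out of both $f(y_n)-f(x_n)$ and $y_n-x_n$ and performing a series division, obtaining $(f(y_n)-f(x_n))/(y_n-x_n) = c_1[1 + (c_2/c_1)e_n + \cdots]$. Assembling $D_n$ and comparing with the genuine derivative $f'(y_n) = c_1[1 + 2(c_2^2/c_1^2)e_n^2 + \cdots]$, the linear term in $e_n$ cancels, so $D_n$ reproduces $f'(y_n)$ exactly through first order and deviates only at $O(e_n^2)$ by a term proportional to $c_3$. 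This controlled deviation is precisely what fixes the order, and getting it right is the main obstacle: one must retain $f(y_n)$ through $e_n^4$ and the divided difference through $e_n^3$, so the bookkeeping is delicate and a symbolic algebra system is the natural tool.

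Finally I would form the correction $f(y_n)/D_n$, again by series division, and compute $e_{n+1} = \hat e_n - f(y_n)/D_n$. The $e_n^2$ and $e_n^3$ coefficients cancel identically, the former because both $\hat e_n$ and $f(y_n)/D_n$ begin with $(c_2/c_1)e_n^2$, leaving a pure $e_n^4$ term. Collecting that coefficient and rewriting $c_2^3/c_1^3 - c_2 c_3/c_1^2 = -c_2(c_1 c_3 - c_2^2)/c_1^3$ gives the stated error equation $e_{n+1} = -\frac{(c_3 c_1 - c_2^2)c_2}{c_1^3}e_n^4 + O(e_n^5)$, and since the leading error exponent is $4$ the method converges with order four.
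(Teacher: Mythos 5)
Your proposal is correct and follows essentially the same route as the paper: a direct Taylor expansion in $e_n$ of $f(x_n)$, $f'(x_n)$, the Newton step, $f(y_n)$, and the quadrature denominator, with cancellation of the $e_n^2$ and $e_n^3$ terms yielding the stated coefficient $-c_2(c_1c_3-c_2^2)/c_1^3$ of $e_n^4$. The only cosmetic difference is that you expand $f(y_n)$ about $\gamma$ via $\hat e_n = y_n - \gamma$ (and explicitly compare the denominator $D_n$ with $f'(y_n)$), whereas the paper expands $f(y_n)$ about $x_n$; both give the same series and the same error equation.
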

\begin{proof}
The Taylor's expansion of $f(x)$ and $f^\prime(x_n)$ around the solution $\gamma$ is given as
\begin{flalign}
f(x_n)&=c_1e_{{n}}+c_{{2}}e_{{n}}^{2}+c_{{3}}e_{{n}}^{3}+c_{{4}}e_{{
n}}^{4}+O \left( e_{{n}}^{5} \right) , \label{eq:fx} \\
f^\prime(x_n)&= c_1+2\,c_{{2}}e_{{n}}+3\,c_{{3}} e_{{n}}^{2}+4\,c_{{4}}e_{{n}}
^{3}+O \left( e_{{n}}^{4} \right). \label{eq:dfx}
\end{flalign}
Here, we have accounted for $f(\gamma)=0$. Dividing the equations \eqref{eq:fx} and \eqref{eq:dfx} we obtain
\begin{multline}
\dfrac{f(x_n)}{f^\prime(x_n)} = e_{{n}}-{\dfrac {c_{{2}}}{c_{{1}}}}{e_{{n}}}^{2}-2\,{\dfrac {c_{{3}}c_{
{1}}-{c_{{2}}}^{2}}{{c_{{1}}}^{2}}}{e_{{n}}}^{3}-{\dfrac {3\,c_{{4}}{c_
{{1}}}^{2}-7\,c_{{3}}c_{{2}}c_{{1}}+4\,{c_{{2}}}^{3}}{{c_{{1}}}^{3}}}{
e_{{n}}}^{4}+O \left( {e_{{n}}}^{5} \right). \label{eq:fxdfx}
\end{multline}
From the first step of the method \eqref{eq:method1} and the equations \eqref{eq:fx} and \eqref{eq:dfx}, we obtain
\begin{equation}
y_n = \gamma+{\dfrac {c_{{2}}}{c_{{1}}}}{e_{{n}}}^{2}+2\,{\dfrac {c_{{3}}c_{{
1}}-{c_{{2}}}^{2}}{{c_{{1}}}^{2}}}{e_{{n}}}^{3}+{\dfrac {3\,c_{{4}}{c_{
{1}}}^{2}-7\,c_{{2}}c_{{3}}c_{{1}}+4\,{c_{{2}}}^{3}}{{c_{{1}}}^{3}}}{e
_{{n}}}^{4}+O \left( {e_{{n}}}^{5} \right). \label{eq:y} 
\end{equation}
By the Taylor's expansion of $f(y_n)$ around $x_n$ and using the fist step of the method \eqref{eq:method1}, we get
\begin{equation}
f(y_n) = f(x_n) + f^{\prime}(x_n)\left(-\dfrac{f(x_n)}{f^\prime(x_n)}\right) + \dfrac{1}{2} f^{\prime\prime}(x_n) \left(-\dfrac{f(x_n)}{f^\prime(x_n)}\right)^2+\cdots,
\end{equation}
the successive derivatives of $f(x_n)$ are obtained by differentiating \eqref{eq:dfx} repeatedly. Substituting these derivatives and using the equations \eqref{eq:fxdfx} into the former equation
\begin{equation}
f(y_n) = c_{{2}}e_{{n}}^{2}+2\,{\frac {c_{{3}}c_{{1}}-{c_{{2}}}^{2}}{c_{{1}}
}}e_{{n}}^{3}+{\frac {3\,c_{{4}}{c_{{1}}}^{2}-7\,c_{{2}}c_{{3}}c_{{1
}}+5\,{c_{{2}}}^{3}}{{c_{{1}}}^{2}}}e_{{n}}^{4}+O \left( e_{{n}}^{
5} \right). \label{eq:fy}
\end{equation}
Finally substituting from the equations \eqref{eq:fx}, \eqref{eq:dfx} and \eqref{eq:fy} into the second step of the contributed method \eqref{eq:method1}, we obtain the error equation for the method
\begin{equation}
e_{n+1}  = -{\dfrac { \left( c_{{3}}c_{{1}}-c_{{2}}^{2} \right) c_{{2}}}{
c_{{1}}^{3}}}e_{{n}}^{4}+O \left( e_{{n}}^{5} \right) . 
\end{equation}
Therefore the contributed method \eqref{eq:method1} is fourth order convergent. This completes our proof.
\end{proof}
To construct optimal eighth order optimal method, we consider the method
\begin{equation}
\left\{ \begin{array}{llll}
y_n &=& x_n - \dfrac{f(x_n)}{f^\prime(x_n)}, \\
z_{n} &=& y_n - \dfrac{f(y_n)}{2\,\left(\dfrac{f(y_n)-f(x_n)}{y_n-x_n}\right)-f^\prime(x_n)},\\
x_{n+1} &=& z_n - \dfrac{f(z_n)}{f^\prime(z_n)}. \label{eq:method3}
\end{array}\right.
\end{equation}
Since the order of the method \eqref{eq:method1} is four and order of the method \eqref{eq:newton} is two. Therefore by the theorem \eqref{thm:1} convergence order of the method \eqref{eq:method3}, which is a combination of the methods \eqref{eq:method1} and \eqref{eq:newton},  is eighth. The method \eqref{eq:method3} require five function evaluations therefore, according to the Kung-Traub conjecture, it is not an optimal method. To develop an optimal method let us again express the first derivative by the Newton's theorem
\begin{equation}
f^\prime(z_n) = f^\prime(x_n) + \int_{x_n}^{z_n}f^{\prime\prime}(t)\,\mathrm{d}t, \label{eq:newton2}
\end{equation}
furthermore let us approximate the integral as follows
\begin{equation}
\int_{x_n}^{z_n}f^{\prime\prime}(t)\,\mathrm{d}t = \nu_1\,f(x_n) + \nu_2\,f(y_n) + \nu_3\,f(z_n) + \nu_4\,f^\prime(x_n),\label{eq:newton3}
\end{equation}
to determine the real constants,  $\nu_1$, $\nu_2$, $\nu_3$ and $\nu_4$ in the preceding equation, we consider the equation is valid for the four functions: $f(t) = \textrm{constant}$, $f(t) = t$, $t(t) = t^2$ and $f(t) = t^3$. And, we obtain the four equations
\begin{equation}
\left\{ \begin{array}{llll}
\nu_1 + \nu_2  +\nu_3 &= 0, \\
\nu_1\,x_n  + \nu_2 \, y_n +\nu_3 \,z_n + \nu_4 &= 0, \\
\nu_1 \,x_n^2+ \nu_2\,y_n^ 2  +\nu_3 \,z_n^2 + 2\,\nu_ 4\,x_n&= 2\,(z_n-x_n), \\
\nu_1\,x_n^3 + \nu_2\, y_n^3 +\nu_3\,z_n^3+3\,\nu_4\,x_n^2 &= 3\,(z_n^2-x_n^2). 
\end{array}\right.\nonumber
\end{equation}
from the preceding equation and the equations \eqref{eq:newton2}, \eqref{eq:newton3}, we get
\begin{multline}
f^\prime(z_n) = -{\dfrac {1}{ \left( -y_{{n}}+x_{{n}} \right) ^{2} \left( -z_{{n}}+y_{{
n}} \right)  \left( -z_{{n}}+x_{{n}} \right) }}  \left[\left( -z_{{n}}+y_{{n}} \right) ^{2} \left( -z_{{n}}+x_{{n}} \right) 
 \left( -y_{{n}}+x_{{n}} \right) f^\prime(x_n) \right. \\
 \left. - \left(x_n -y_{{n}} \right) ^{2} \left( 2\,y_{{n}}-3\,z
_{{n}}+x_{{n}} \right) f \left( z_{{n}} \right) + \left(x_n -z_{{n}}\right) ^{3}f \left( y_{{n}} \right) - \left(y_n -z_{{n}}\right) ^{2} \left( 3\,x_n-2\,y_{{n}}-z_{{n}} \right) f \left( x
_{{n}} \right) \right].\nonumber
\end{multline}
Combining the eighth order method \eqref{eq:method3} and the preceding equation, we propose the following optimal eighth order iterative method ({\bf M-8})
\begin{equation}
\left\{ \begin{array}{llll}
y_n &=& x_n - \dfrac{f(x_n)}{f^\prime(x_n)}, \\
z_{n} &=& y_n - \dfrac{f(y_n)}{2\,\left(\dfrac{f(y_n)-f(x_n)}{y_n-x_n}\right)-f^\prime(x_n)},\\
x_{n+1} &=& z_n - \dfrac{f(z_n)}{\left( -y_{{n}}+x_{{n}} \right) ^{2} \left( -z_{{n}}+y_{{
n}} \right)  \left( -z_{{n}}+x_{{n}} \right)} \left[\left( -z_{{n}}+y_{{n}} \right) ^{2} \left( -z_{{n}}+x_{{n}} \right) \right. \\
&& \left.  \left( -y_{{n}}+x_{{n}} \right) f^\prime(x_n)
  - \left(x_n -y_{{n}} \right) ^{2} \left( 2\,y_{{n}}-3\,z
_{{n}}+x_{{n}} \right) f \left( z_{{n}} \right) \right. \\
&& \left. + \left(x_n -z_{{n}}\right) ^{3}f \left( y_{{n}} \right) - \left(y_n -z_{{n}}\right) ^{2} \left( 3\,x_n-2\,y_{{n}}-z_{{n}} \right) f \left( x
_{{n}} \right) \right] . \label{eq:method2}
\end{array}\right.
\end{equation}
Since the method ({\bf {M-8}}) is eighth order convergent and it requests only four evaluations during each iteration. Thus according to the Kung-Traub conjecture it is an optimal method. We prove the eighth order convergent disposition of the iterative method \eqref{eq:method1} through the following theorem.
\begin{thm}
Let $\gamma$ be a simple zero of a sufficiently differentiable function $f\colon{\mathbf{D}}\subset{\mathbf{R}}\mapsto{\mathbf{R}}$ in an open interval $\mathbf{D}$. If $x_0$ is sufficiently close to $\gamma$, the convergence order of the method \eqref{eq:method2} is $8$. The error equation for the method \eqref{eq:method2} is given as
$$e_{n+1} = -{\frac {{c_{{2}}}^{2} \left( c_{{3}}{c_{{1}}}^{3}c_{{4}}-c_{{4
}}{c_{{1}}}^{2}{c_{{2}}}^{2}-{c_{{3}}}^{2}{c_{{1}}}^{2}c_{{2}}+2\,c_{{
3}}c_{{1}}{c_{{2}}}^{3}-{c_{{2}}}^{5} \right) }{{c_{{1}}}^{7}}}e_n ^{8}+O \left( e_n \right)^{9}.$$
\end{thm}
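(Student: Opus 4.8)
The plan is to mirror the fourth-order proof: expand every quantity in powers of $e_n=x_n-\gamma$ about the simple zero $\gamma$, keeping terms through order eight. The decisive simplification is that the first two substeps of \eqref{eq:method2} are \emph{identical} to the method \eqref{eq:method1}, so the previous theorem already delivers the error of the inner iterate for free. Writing $e_{z_n}=z_n-\gamma$, we inherit $e_{z_n}=-\frac{(c_3c_1-c_2^2)c_2}{c_1^3}e_n^4+O(e_n^5)$, i.e. $z_n$ is fourth-order accurate. It then remains only to analyse the outer step $x_{n+1}=z_n-f(z_n)/F$, where $F$ denotes the quadrature expression in the denominator of the third line of \eqref{eq:method2}, viewed as an approximation to $f'(z_n)$.

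First I would record the node separations that control everything later. From \eqref{eq:fxdfx} and \eqref{eq:y} one gets $y_n-x_n=-e_n+O(e_n^2)$ and $z_n-x_n=-e_n+O(e_n^2)$, while the two inner iterates are very close: $z_n-y_n=-\frac{c_2}{c_1}e_n^2+O(e_n^3)$. I would also expand $f(z_n)=c_1e_{z_n}+c_2e_{z_n}^2+\cdots=c_1e_{z_n}+O(e_n^8)$ and $f'(z_n)=c_1+2c_2e_{z_n}+\cdots=c_1+O(e_n^4)$, using $e_{z_n}=O(e_n^4)$.

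The heart of the argument is to estimate the relative derivative-approximation error $\delta:=(F-f'(z_n))/f'(z_n)$. Since the weights $\nu_1,\dots,\nu_4$ are fixed by requiring \eqref{eq:newton3} to be exact for $f\in\{1,t,t^2,t^3\}$, the formula reproduces $f'(z_n)$ exactly for every cubic, so its error is governed by $c_4$ (and higher coefficients) multiplied by products of the small node separations above. Substituting the Taylor expansions of $f(x_n),f(y_n),f(z_n),f'(x_n)$ into the bracket of \eqref{eq:method2}, dividing by the prefactor $(x_n-y_n)^2(y_n-z_n)(x_n-z_n)$, and simplifying, I would show that $F-f'(z_n)=O(e_n^4)$, hence $\delta=O(e_n^4)$; this is precisely the step that injects the $c_4$-dependence into the final error equation.

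Finally I would assemble the outer step. Writing $F=f'(z_n)(1+\delta)$ and expanding the geometric series gives $e_{n+1}=\left(e_{z_n}-\frac{f(z_n)}{f'(z_n)}\right)+\frac{f(z_n)}{f'(z_n)}\,\delta+O(e_n^{12})$. The first bracket is the genuine Newton correction at $z_n$, equal to $\frac{c_2}{c_1}e_{z_n}^2+O(e_n^{12})=O(e_n^8)$; the second equals $e_{z_n}\,\delta+O(e_n^{12})$, which is also $O(e_n^8)$ since both factors are $O(e_n^4)$. Thus $e_{n+1}=O(e_n^8)$, and collecting the two $e_n^8$-contributions gives the stated coefficient. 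I expect the main obstacle to be exactly this final extraction: the $c_4$ terms force one to carry the quadrature-error expansion of $F$ to full order and then combine it with $\frac{c_2}{c_1}e_{z_n}^2$, producing a bulky symbolic expression that must collapse to $-\frac{c_2^2(c_3c_1^3c_4-c_4c_1^2c_2^2-c_3^2c_1^2c_2+2c_3c_1c_2^3-c_2^5)}{c_1^7}$; a computer-algebra check is the practical route to certify the cancellations.
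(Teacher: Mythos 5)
Your proposal is correct, and it takes a genuinely different route from the paper. The paper's proof is a single brute-force substitution: it re-expands the second substep to get $z_n-\gamma$ explicitly through the $e_n^5$ term (equation \eqref{eq:z}), expands $f(z_n)$ likewise (equation \eqref{eq:fz}), and then substitutes all of \eqref{eq:fx}, \eqref{eq:dfx}, \eqref{eq:fy}, \eqref{eq:y}, \eqref{eq:z}, \eqref{eq:fz} into the third substep, letting the (evidently computer-assisted) expansion produce the $e_n^8$ coefficient in one shot. You instead decompose $e_{n+1}=\bigl(e_{z_n}-f(z_n)/f'(z_n)\bigr)+e_{z_n}\,\delta+O(e_n^{12})$, inherit $e_{z_n}$ from the fourth-order theorem, and isolate the relative quadrature error $\delta=(F-f'(z_n))/f'(z_n)$, which exactness on cubics forces to be $O(e_n^4)$ with $c_4$-dependence. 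This is structurally cleaner and explains \emph{why} the final coefficient factors as it does; note moreover that your feared CAS step is avoidable: since $F$ uses precisely the Hermite data $f(x_n),f'(x_n),f(y_n),f(z_n)$ and reproduces cubics, $F$ equals the derivative at $z_n$ of the cubic Hermite interpolant, so the standard divided-difference error identity gives $f'(z_n)-F=f[x_n,x_n,y_n,z_n,z_n]\,(z_n-x_n)^2(z_n-y_n)=-\frac{c_2c_4}{c_1}e_n^4+O(e_n^5)$, hence $\delta=\frac{c_2c_4}{c_1^2}e_n^4+O(e_n^5)$; combining $e_{z_n}\delta=-\frac{c_2^2c_4(c_3c_1-c_2^2)}{c_1^5}e_n^8$ with the Newton correction $\frac{c_2}{c_1}e_{z_n}^2=\frac{c_2^3(c_3c_1-c_2^2)^2}{c_1^7}e_n^8$ yields $e_{n+1}=-\frac{c_2^2(c_3c_1-c_2^2)\bigl(c_4c_1^2-c_2(c_3c_1-c_2^2)\bigr)}{c_1^7}e_n^8+O(e_n^9)$, which expands to exactly the stated numerator $c_3c_1^3c_4-c_4c_1^2c_2^2-c_3^2c_1^2c_2+2c_3c_1c_2^3-c_2^5$. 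Your approach buys a hand-checkable proof and a factored error constant; the paper's buys uniformity with its fourth-order proof at the cost of opaque symbolic expansion. One small point of care: your remainder bookkeeping (only the leading term of $e_{z_n}$ matters, since its $e_n^5$ term only pollutes $O(e_n^9)$) is right, but it deserves the one-line justification you implicitly use.
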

\begin{proof}
Substituting from the equations \eqref{eq:fx}, \eqref{eq:dfx}, \eqref{eq:fxdfx}, \eqref{eq:fy} into the second step of the contributed method \eqref{eq:method2} yields
\begin{equation}
z_{n} = \gamma-{\dfrac { \left( c_{{3}}c_{{1}}-c_{{2}}^{2} \right) c_{{2}}}{
c_{{1}}^{3}}}e_{{n}}^{4}-2\,{\dfrac {c_{{2}}c_{{4}}{c_{{1}}}^{2}+{c
_{{3}}}^{2}{c_{{1}}}^{2}-4\,c_{{3}}c_{{1}}{c_{{2}}}^{2}+2\,{c_{{2}}}^{
4}}{{c_{{1}}}^{4}}}{e_{{n}}}^{5}+O \left( {e_{{n}}}^{6} \right). \label{eq:z} 
\end{equation}
Here, we have used the first step of the method \eqref{eq:method2}. To find a Taylor expansion $f(z_n)$, we consider the Taylor's series of $f(x)$ around $y_n$
\begin{equation}
f(z_n) = f(y_n) + f^\prime(y_n) \,(z_n-y_n)+ \dfrac{f^{\prime\prime}(y_n)}{2}(z_n-y_n)^2+\cdots,
\end{equation}
substituting from equation \eqref{eq:fy} and using the second step of the contributed method \eqref{eq:method2}, we obtain
\begin{equation}
f(z_n) = -{\dfrac { \left( c_{{3}}c_{{1}}-{c_{{2}}}^{2} \right) c_{{2}}}{{c_{{1
}}}^{2}}}{e_{{n}}}^{4}-2\,{\dfrac {c_{{2}}c_{{4}}{c_{{1}}}^{2}+{c_{{3}}
}^{2}{c_{{1}}}^{2}-4\,c_{{3}}c_{{1}}{c_{{2}}}^{2}+2\,{c_{{2}}}^{4}}{{c
_{{1}}}^{3}}}{e_{{n}}}^{5}+O \left( {e_{{n}}}^{6} \right). \label{eq:fz}
\end{equation}
Here, the higher order derivatives of $f(x)$ at the point $y_n$ are obtained by differentiating the equation \eqref{eq:fy} with respect $e_n$. Finally, to obtain the error equation for the method \eqref{eq:method2}, substituting from the equations \eqref{eq:fx}, \eqref{eq:dfx}, \eqref{eq:fy}, \eqref{eq:y}, \eqref{eq:z} \eqref{eq:fz} into the third step of the contributed method \eqref{eq:method2} yields the error equation 
\begin{equation}
e_{n+1} = -{\frac {{c_{{2}}}^{2} \left( c_{{3}}{c_{{1}}}^{3}c_{{4}}-c_{{4
}}{c_{{1}}}^{2}{c_{{2}}}^{2}-{c_{{3}}}^{2}{c_{{1}}}^{2}c_{{2}}+2\,c_{{
3}}c_{{1}}{c_{{2}}}^{3}-{c_{{2}}}^{5} \right) }{{c_{{1}}}^{7}}}e_n ^{8}+O \left( e_n \right)^{9},
\end{equation}
which shows that the convergence order of the contributed method \eqref{eq:method2} is $8$. This completes our proof.
\end{proof}
\section{Numerical examples}
Let us review some well known methods for numerical comparison. Based upon the well known King's method \citep{king} and the Newton's method \eqref{eq:newton},  recently Li et al. constructed a three step and sixteenth order iterative method ({\bf LMM})
\begin{equation}
\left\{ \begin{array}{llll}
y_n &=& x_n - \dfrac{f(x_n)}{f^\prime(x_n)},\\
z_n &=& y_n - \dfrac{2\,f(x_n)-f(y_n)}{2\,f(x_n)-5\,f(y_n)}\,\dfrac{f(y_n)}{f^\prime(x_n)},\\
x_{n+1} &=&z_n- \dfrac{f(z_n)}{f^\prime(z_n)}-\dfrac{2\,f(z_n)-f\left(z_n- \dfrac{f(z_n)}{f^\prime(z_n)}\right)}{2\,f(z_n)-5\,f\left(z_n- \dfrac{f(z_n)}{f^\prime(z_n)}\right)}\,\dfrac{f\left(z_n- \dfrac{f(z_n)}{f^\prime(z_n)}\right)}{f^\prime(z_n)}, 
\end{array}\right.\label{eq:lmmw}
\end{equation}
\citep{sixteenth1}. Based upon the Jarratt's method \citep{jarratt}, recently Ren et al. \citep{six2,rwb00} formulated a sixth order convergent iterative family consisting of three steps and two parameters ({\bf RWB})
\begin{equation}
\left\{ \begin{array}{llll}
y_n &=& & x_n - \dfrac{2}{3} \dfrac{f(x_n)}{f^\prime(x_n)},\\
z_n &=& & x_n - \dfrac{3\,f^\prime(y_n)+ f^\prime(x_n)}{6\,f^\prime(y_n)-2\,f^\prime(x_n)}\,\dfrac{f(x_n)}{f^\prime(x_n)}, \\
x_{n+1} &=& & z_n - \dfrac{(2a-b)f^\prime(x_n)+bf^\prime(y_n)+cf(x_n)} {(-a-b)f^\prime(x_n)+(3a+b)f^\prime(y_n)+cf(x_n)}
\dfrac{f(z_n)}{f^\prime(x_n)},\label{eq:rwb}
\end{array}\right.
\end{equation}
where $a,b,c\in\mathbb{R}$ and $a{\neq}0$. Wang et al. \citep{six2} also developed a sixth order convergent iterative family, based upon the well known Jarratt's method, for solving non-linear equations. Their methods consist of three steps and two parameters ({\bf WKL})
\begin{equation}
\left\{ \begin{array}{llll}
y_n &=& & x_n - \dfrac{2}{3} \dfrac{f(x_n)}{f^\prime(x_n)},\\
z_n &=& & x_n - \dfrac{3\,f^\prime(y_n)+ f^\prime(x_n)}{6\,f^\prime(y_n)-2\,f^\prime(x_n)}\,\dfrac{f(x_n)}{f^\prime(x_n)}, \\
x_{n+1} &=& & z_n - \dfrac{(5\alpha+3\beta)f^\prime(x_n)-(3\alpha+\beta)f^\prime(y_n)} {2\alpha\,f^\prime(x_n)+2\beta\,f^\prime(y_n)}
\dfrac{f(z_n)}{f^\prime(x_n)},\label{eq:wkl}
\end{array}\right.
\end{equation}
where $\alpha,\beta\in\mathbb{R}$ and $\alpha+\beta{\neq}0$. Earlier, Neta \cite{six4} has developed a sixth order convergent family of methods consisting of three steps and one paramete ({\bf NETA})
\begin{equation}
\left\{ \begin{array}{llll}
y_n &=& & x_n - \dfrac{f(x_n)}{f^\prime(x_n)}, \nonumber \\
z_n &=& & y_n - \dfrac{f(x_n)+af(y_n)}{f(x_n)+(a-2)f(y_n)}\,\dfrac{f(y_n)}{f^\prime(x_n)},\nonumber \\
x_{n+1} &=& & z_n -
\dfrac{f(x_n)-f(y_n)}{f(x_n)-3f(y_n)}\,\dfrac{f(z_n)}{f^\prime(x_n)}.\label{eq:neta}
\end{array}\right.
\end{equation}
We may notice that, in the preceding method, with the choice $a=-1$ the correcting factor in the last two steps is the same. Chun and Ham \citep{six1} also developed a sixth order modification of the Ostrowski's method. Their family of methods consist of three-steps ({\bf CH})
\begin{equation}
\left\{ \begin{array}{llll}
y_n &=& & x_n -\dfrac{f(x_n)}{f^\prime(x_n)},  \\
z_n &=& & y_n - \dfrac{f(x_n)}{f(x_n)-2f(y_n)} \,\dfrac{f(y_n)}{f^\prime(x_n)},  \\
x_{n+1} &=& &z_n - \mathcal{H}(u_n) \dfrac{f(z_n)}{f^\prime(x_n)}, \label{eq:ch}
\end{array}\right.
\end{equation}
where $u_n = f(y_n)/f(x_n)$ and $\mathcal{H}(t)$ represents a real valued function satisfying $\mathcal{H}(0)=1,\,\,\mathcal{H}^\prime(0)=2$. In the case
\begin{equation}
\mathcal{H}(t) = \dfrac{1+(\beta+2)t}{1+\beta{t}}
\end{equation}
the third substep is similar to the method developed by Sharma and Guha \cite{six3}. The classical Chebyshev method is expressed as  ({\bf CM}) 
\begin{alignat}{4}
x_{n+1} = x_n - \frac{f(x_n)}{f^\prime(x_n)}\left[1+\frac{1}{2}\frac{f^{\prime\prime}(x_n)f(x_n)}{f^\prime(x_n)^2}\right],\label{eq:chebyshev}
\end{alignat}
\citep{geom} and the classical  Halley method is expressed as ({\bf HM}) 
\begin{alignat}{4}
x_{n+1} = x_n - \frac{f(x_n)}{f^\prime(x_n)}\left[\frac{2\,f^\prime(x_n)^2}{2\,f^\prime(x_n)^2-{f^{\prime\prime}(x_n)f(x_n)}}\right],\label{eq:halley}
\end{alignat} 
\citep{geom}. The convergence order $\xi$ of an iterative method is defined as
\begin{equation}
\lim_{n\to\Infinity}\dfrac{\vert{e_{n+1}}\vert}{\vert{e_{n}}\vert^\xi} = c \neq{0},\nonumber
\end{equation}
and furthermore this leads to the following approximation of the computational order of convergence (COC)
$$\rho \approx \dfrac{\Log{\vert{({x_{n+1}-\gamma})/({x_n-\gamma})}\vert}}{\Log{\vert{({x_{n}-\gamma})/({x_{n-1}-\gamma})}\vert}}.$$
For convergence it is required: $\vert{x_{n+1} - x_{n}}\vert< \epsilon$ and $\vert{f(x_n)}\vert< \epsilon$. Here, $\epsilon = {10}^{-320}$. We test the methods for the following functions
\begin{alignat}{5}
f_1(x) &= x^3+4\,x^2 -10, \qquad &\gamma& &\approx& 1.365.\nonumber \\
f_2(x) &= x\,\exp(x^2)-\sin^2(x)+3\,\cos(x)+5,\qquad &\gamma& &\approx& -1.207.\nonumber \\
f_3(x) &= \sin^2(x)-x^2+1,\qquad &\gamma& &\approx& \pm1.404.\nonumber \\
f_4(x) &= \ArcTan{x}\qquad           &\gamma& &=& 0.\nonumber \\
f_5(x) &= x^4+\Sin{{\pi}/{x^2}}-5, \qquad &\gamma& &=& \sqrt{2}.\nonumber \\
f_{6}(x) &= \E^{(-x^2+x+2)}-1, \qquad &\gamma& &=&  2.0.\nonumber 
\end{alignat}
Computational results are reported in the Table \ref{table:1} and the Table \ref{table:2}. The Table \ref{table:1} presents {(number of functional evaluations, COC during the second last iterative step)} for various methods. While the Table \ref{table:2} reports $\vert{x_{n+1}-x_n\vert}$ for the method {\bf M-8}. Free parameters are randomly selected as: for the method {\bf RWB} $a = b = c = 1$, in the method by Chun et al. ({\bf CH}) $\beta = 1$, in the method {\bf WKL} $\alpha = \beta = 1$, in the method {\bf NETA} $a = 10$.
{\tabcolsep1.550mm
\renewcommand{\ra}[1]{\renewcommand{\arraystretch}{#1}}
\begin{table*}[!ht]
\centering
\ra{1.5}
\small
\begin{tabularx}{\textwidth}{@{}ln{1}{2}cccccccccc@{}}
\toprule
{\bf $f(x)$} & {\bf $x_0$} & {\bf HM}  & {\bf CM} & {\bf LMM} & {\bf NM}  & {\bf RWB} & {\bf NETA} & {\bf CH} & {\bf WKL} & {\bf M-4} & {\bf M-8} \\
\midrule
$f_1(x)$  & $1.2$   &   $(27,3)$ & $(27,3)$ & $(24,16)$  & $(20,2)$  & $(20,3)$ & $(20,6)$ & $(20,6)$ & $(20,3)$ & ${(18,4)}$ & ${\bf (16,8)}$\\
$f_2(x)$  & $-1.0$   &   $(24,3)$ & $(24,3)$ & $(24,15.5)$  & ${ (22,2)}$  & ${ (20,3)}$ & ${ (20,6)}$ & ${(20,6)}$ & ${(20,3)}$ & ${(18,4)}$ & ${ \bf (16,8)}$\\ 
$f_3(x)$  & $1.5$   &   $(21,3)$ & $(21,3)$ & $(18,15.8)$    & $(20,2)$& $(20,3)$ & ${(20,6)}$ & ${(20,6)}$ & $(20,3)$ & $(18,4)$ & ${\bf (16,8)}$\\  
$f_4(x)$  & $0.5$   &   $(21,3)$     & $(21,3)$   &  $(24,24)$   & ${(18,2)}$& ${ (20,3)}$ & ${\bf (16,7)}$ & ${\bf  (16,7)}$ & ${ (20,3)}$ & ${(18,5)}$ & ${\bf (16,11)}$\\ 
$f_5(x)$&$1.3$ &$(24,3)$& $(24,3)$&  $(24,16)$ & ${(20,2)}$ & ${ (20,3)}$ & ${ (20,6)}$ & ${ (20,6)}$ & ${(20,3)}$ & ${(18,4)}$  & ${\bf (16,8)}$\\ 
$f_6(x)$  & $1.2$ &   $(24,3)$   & $(27,3)$ & $(24,16)$   & $(26,2)$ & $(20,3)$ & $(20,6)$ & $(20,6)$ & $(20,6)$ & ${(21,4)}$ & ${\bf (20,8)}$\\ 
\bottomrule
\end{tabularx}
\caption{(number of functional evaluations, COC) for various iterative methods.}\label{table:1}
\end{table*}}

An optimal iterative method for solving nonlinear equations must require least number of function evaluations. In the Table \ref{table:1}, methods which require least number of function evaluations are marked in bold. We acknowledge, through the Table \ref{table:1}, that the contributed methods in this article are showing better performance to the existing methods in the literature.
\begin{table*}[!ht]
\ra{1.3}\tabcolsep2.30mm
\small
\centering
\begin{tabular}{@{}llllll@{}}
\toprule
{\bf $f_1(x)$} & {\bf $f_2(x)$} & {\bf $f_3(x)$} & {\bf $f_4(x)$} & {\bf $f_5(x)$} & {\bf $f_6(x)$}\\
\hline
$\numprint{1.6e{-1}}$   & $\numprint{2.0e{-1}}$    & $\numprint{9.5e{-2}}$      &   $\numprint{4.2e{-1}}$  &   $\numprint{1.1e{-1}}$ &   $\numprint{7.9e{-1}}$\\
$\numprint{3.3e{-9}}$   & $\numprint{2.4e{-6}}$    & $\numprint{4.0e{-10}}$     & $\numprint{1.7e{-5}}$   &   $\numprint{1.1e{-8}}$ &   $\numprint{8.6e{-4}}$\\
$\numprint{6.4e{-71}}$   & $\numprint{1.9e{-45}}$   & $\numprint{6.3e{-77}}$    & $\numprint{3.1e{-55}}$ &   $\numprint{2.8e{-65}}$ &   $\numprint{2.9e{-25}}$\\ 
$\numprint{1.1e{-564}}$   & $\numprint{2.8e{-358}}$   & $\numprint{2.5e{-611}}$ & $\numprint{1.9e{-601}}$ &   $\numprint{5.8e{-518}}$ &   $\numprint{5.8e{-197}}$\\
***********   & ***********   & *********** & *********** &   *********** &   $\numprint{1.3e{-1570}}$\\
\bottomrule
\end{tabular}
\caption{Generated $\vert{x_{n+1}-x_n}\vert$ with $n\ge{1}$ by the method {\bf M-8}. For initialization see the Table \ref{table:1}.}\label{table:2}
\end{table*}
\newpage


\begin{thebibliography}{0}
\bibitem{weerakoon}
Weerakoon S., Fernando T.G.I., A Variant of Newton's Method with Accelerated Third-Order Convergence. {\it Appl. Math. Lett.} {\bf 13} (8): 87--93 (2000).


\bibitem{frontini} 
Frontini M., Sormani E., Some variant of Newton's method with third-order convergence. {\it Appl. Math. Comput.} {\bf 140}: 419--426  (2003).


\bibitem{homeier}
Homeier H.H.H., On Newton-type methods with cubic convergence. {\it J. Comput. Appl. Math.} {\bf 176}: 425--432  (2005).


\bibitem{homeier1}
Homeier H.H.H., A modified Newton method for rootfinding with cubic convergence. {\it J. Comput. Appl. Math.} {\bf 157}: 227--230  (2003).

\bibitem{optimal}
Kung  H.T., Traub J.F., Optimal Order of One-Point and Multipoint Iteration. {\it J. Assoc. Comput. Math.} {\bf 21}: 634--651 (1974).

\bibitem{jarratt}
Jarratt P., Some Fourth Order Multipoint Iterative Methods for Solving Equations. {\it Math. Comp.} {\bf 20}(95): 434--437  (1966).

\bibitem{chun}
Chun C., Some fourth-order iterative methods for solving nonlinear equations. {\it Appl. Math. Comput.} {\bf 195}(2): 454--459 (2008).

\bibitem{amit}
Maheshwari A.K., A fourth-order iterative method for solving nonlinear equations. {\it Appl. Math. Comput.} {\bf 211}(2): 383--391 (2009).


\bibitem{traub}
Traub J.F., Iterative Methods for the Solution of Equations. Prentice Hall, New York, 1964.

\bibitem{ostro}
Ostrowski A.M., Solution of Equations and Systems of Equations. Academic Press, New York-London, 1966.



\bibitem{kou}
Kou J., Li Y., Wang X., A composite fourth-order iterative method. {\it Appl. Math. Comput.} {\bf 184}: 471--475 (2007).

\bibitem{king}
King R., A family of fourth order methods for nonlinear equations. {\it SIAM J. Numer. Anal.} {\bf 10}: 876--879 (1973).







\bibitem{pet}
Petkovi\'c M.S., On a General Class of Multipoint Root-Finding Methods of High Computational Efficiency. {\it SIAM. J. Numer. Anal.} {\bf 47}(6): 4402--4414 (2010).



\bibitem{khattri2}
Khattri S.K., Altered Jacobian Newton Iterative Method for
Nonlinear Elliptic Problems, {\it IAENG Int. J. Appl. Math.} {\bf   38}: (2008)


\bibitem{geom}
Kanwar V., Tomar S.K., Modified families of Newton, Halley and Chebyshev methods, {\it Appl. Math. Comput.} {\bf 192}(1): 20--26 (2007). 

\bibitem{sixteenth1}
Li X., Mu C., Ma J., Wang C., Sixteenth order method for nonlinear equations, {\it Appl. Math. Comput.} {\bf  215}(10): 3769--4054 (2009).


\bibitem{rwb00}
Ren H., Wu Q., Bi W., New variants of Jarratts method with sixth-order convergence. {\it 	Numer. Algorithms}: {\bf 52}, 585-603 (2009).


\bibitem{six2}
Wang X., Kou J.,  Li Y., A variant of Jarratt method with sixth-order convergence. {\it Appl. Math. Comput.}, {\bf   190}: 14--19 (2008).


\bibitem{six3}
Sharma J.R., Guha R.K., A family of modified Ostrowski methods with accelerated sixth order convergence, {\it Appl. Math. Comput.}, {\bf   190}: 111--115 (2007).

\bibitem{six4}
Neta B.,  A Sixth-Order Family of Methods for Nonlinear Equations, {\it Int. J. Comput. Math.}, {\bf  7}: 157--161 (1979).

\bibitem{six1}
Chun C., Ham Y. Some sixth-order variants of Ostrowski root-finding methods. {\it Appl. Math. Comput.}, {\bf   193}: 389--394 (2003).


\end{thebibliography}
\end{document}